\newtheorem{assumption}{Assumption}
\begin{document}
\title{Comment on ``First-order methods almost always avoid strict saddle points''
\thanks{This research was supported by
	the Beijing Natural Science Foundation under grant Z180005, and
	the National Natural Science Foundation of China under grants 12171021 and 11822103.}
        }


\author{Jinyang Zheng \and Yong Xia
}

\authorrunning{J. Zheng, Y. Xia} 

\institute{
 J. Zheng \and Y. Xia \at
              School of Mathematical Sciences, Beihang University, Beijing,
100191, P. R. China
\email{yxia@buaa.edu.cn (Y. Xia, Corresponding author)}
}

\date{Received: date / Accepted: date}
\maketitle
\begin{abstract}
The analysis on  the global stability of Riemannian gradient descent method in manifold optimization (i.e., it
avoids strict saddle points for almost all initializations)
 due to Lee et al. (Math. Program. 176:311-337) is corrected. Moreover, an explicit condition on the step-size is presented by the newly introduced retraction L-smooth property.
\end{abstract}

\keywords{Manifold Optimization \and Riemannian gradient \and Saddle points \and Local minimum \and Dynamic systems}

\section{Introduction}

Based on results from dynamic system \cite{shub13},
Lee et al. \cite{lee19} first established the global stability of first-order methods for unconstrained optimization, i.e., they can avoid saddle points for almost all initializations. More precisely,
the first-order iteration method $x_{k+1} = g(x_k)$ almost converges to minimizers under the assumptions that $g(x)$ is a local diffeomorphsim and the derivative $Dg(x)$ has a negative eigenvalue at every strict saddle points. Mathematically, we have
\begin{theorem}[\cite{lee19}]\label{thm1}
Given a $d$-dimensional manifold $\mathcal{X}$, and a $C^1$ mapping $g:\mathcal{X} \rightarrow \mathcal{X}$, the set of initial points that converge to an unstable fixed point has measure zero,
	 \begin{center}
	 	$\mu\left(\left\{x_{0}: ~\lim x_{k} \in \mathcal{A}_{g}^{*}\right\}\right)=0$,~ if $\operatorname{det}(\mathrm{D} g(x)) \neq 0$ for all $x \in \mathcal{X}$,
	 \end{center}
	where $\operatorname{det}(\mathrm{D} g(x))$ is the determinant of the matrix representing $\mathrm{D} g(x)$ with respect to an arbitrary basis, then $\operatorname{det}(\mathrm{D} g(x)) \neq 0$ if and only if  $g(x)$ is a local diffeomorphsim, the set of unstable fixed points $\mathcal{A}_{g}^{*}$ is defined as
	\[
	\mathcal{A}_{g}^{*}=\left\{x:~ g(x)=x, ~\max _{i}\left|\lambda_{i}(D g(x))\right|>1\right\},
	\]
and $\lambda_i(\cdot)$ returns the $i$-th largest eigenvalue of matrix $(\cdot)$.
\end{theorem}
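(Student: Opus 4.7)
My plan is to invoke the Center-Stable Manifold Theorem from smooth dynamics, exactly in the spirit of the original proof of Shub. First I would fix an unstable fixed point $x^*\in \mathcal{A}_g^*$. Because $\det(Dg(x^*))\neq 0$, the map $g$ is a local $C^1$ diffeomorphism near $x^*$, and since $x^*$ is unstable there exists at least one eigenvalue of $Dg(x^*)$ with modulus strictly greater than one. The Center-Stable Manifold Theorem then produces an open neighborhood $U_{x^*}\subset \mathcal{X}$ and an embedded $C^1$ submanifold $W^{cs}_{\rm loc}(x^*)\subset U_{x^*}$ whose dimension equals the number of eigenvalues of $Dg(x^*)$ with modulus $\le 1$, hence is strictly less than $d$. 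In particular $W^{cs}_{\rm loc}(x^*)$ has Lebesgue measure zero in $\mathcal{X}$ (measure here is defined via a fixed atlas, since on a manifold the notion of a measure-zero set is chart-independent).

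Next I would argue that any orbit $\{x_k\}$ with $x_k\to x^*$ must, for $k$ large enough, be trapped inside $W^{cs}_{\rm loc}(x^*)$; this is the key trapping property of the local center-stable manifold. Consequently the set of initial conditions whose orbit converges to $x^*$ is contained in
\[
\bigcup_{k=0}^{\infty} g^{-k}\!\bigl(W^{cs}_{\rm loc}(x^*)\bigr).
\]
Since $g$ is a local diffeomorphism, each $g^{-1}$ restricted to a small chart is Lipschitz, hence sends measure-zero sets to measure-zero sets; iterating, each $g^{-k}(W^{cs}_{\rm loc}(x^*))$ is measure zero, and so is the countable union.

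Finally I would patch the local analysis into a global one. The manifold $\mathcal{X}$, being second countable, admits a countable subcover of the open cover $\{U_{x^*}\}_{x^*\in \mathcal{A}_g^*}$; equivalently, $\mathcal{A}_g^*$ itself is Lindel\"of and can be covered by countably many neighborhoods $U_{x_j^*}$. Thus
\[
\bigl\{x_0:\ \lim x_k\in\mathcal{A}_g^*\bigr\}\subset \bigcup_{j}\bigcup_{k\ge 0} g^{-k}\!\bigl(W^{cs}_{\rm loc}(x_j^*)\bigr),
\]
a countable union of measure-zero sets, which gives $\mu=0$ as required.

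The main obstacle I anticipate is not the measure-theoretic bookkeeping but rather justifying the use of the Center-Stable Manifold Theorem on a general $C^1$ manifold: one must pass to local charts, verify that the resulting map satisfies the hypotheses of the Euclidean version of the theorem, and check that the invariant submanifold so obtained is intrinsic (independent of the chart) so that the claim ``measure zero in $\mathcal{X}$'' is well-defined. The equivalence $\det(Dg(x))\neq 0\iff g$ is a local diffeomorphism, asserted in the statement, is a direct consequence of the inverse function theorem and needs no separate argument.
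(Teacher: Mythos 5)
Your proposal is correct and follows essentially the same route as the source this theorem is quoted from: the paper itself states Theorem \ref{thm1} without proof, citing Lee et al.\ \cite{lee19}, whose argument is exactly the one you outline --- the local center-stable manifold theorem of Shub \cite{shub13} at each unstable fixed point, the trapping property, invertibility of $g$ (from $\det(\mathrm{D}g)\neq 0$) to pull back measure-zero sets, and a Lindel\"of covering argument to pass to a countable union. No substantive difference from the established proof.
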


As a natural extension, for the problem of minimizing $f(x)$ on the manifold $\mathcal{M}$, Lee et al. \citep[section 5.5]{lee19} established the global stability of
the  Riemannian  gradient method \cite{absil12} with projection-like retraction reads as:
\begin{equation}
x_{k+1}=g(x_k) \triangleq P_{\mathcal{M}}\left(x_{k}-\alpha P_{Tx_{k}M} \left(\nabla f\left(x_{k}\right)\right)\right), \label{RG}
\end{equation}
where $\nabla f$ is the gradient of function $f$, the orthogonal projection operator $P_{\mathcal{N}}$ onto the manifold $\mathcal{N}$ is defined by
$$
P_{\mathcal{N}}(x):=\operatorname{argmin}\{\|x-y\|:~ y \in \mathcal{N}\}.
$$
The strict saddle assumption in manifold version requires that Riemannian Hessian has at least one negative eigenvalue. Following the proof of Theorem \ref{thm1}, Lee et al.  \cite{lee19} verified the unstability of the strict saddle points when calling the iteration \eqref{RG}  by the following two key results.

\begin{proposition}{\bf(\cite[Prop. 8]{lee19})}\label{thm2:1}
	 At a strict saddle point $x^*$, $Dg(x^*)$ has an eigenvalue of magnitude larger than 1.
\end{proposition}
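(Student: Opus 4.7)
The plan is to differentiate the iteration map $g$ at a strict saddle point $x^*$ and identify the restriction of $Dg(x^*)$ to the tangent space $T_{x^*}\mathcal{M}$ with $I - \alpha\, \mathrm{Hess}\, f(x^*)$, from which the required eigenvalue bound follows from the strict saddle assumption.

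First I would use that $x^*$ is a critical point of $f$ on $\mathcal{M}$, so $P_{T_{x^*}\mathcal{M}}(\nabla f(x^*)) = 0$. Hence the inner argument of $P_{\mathcal{M}}$ in \eqref{RG} equals $x^*\in\mathcal{M}$ itself, so $P_{\mathcal{M}}$ is differentiated at a point on $\mathcal{M}$. A standard fact for the metric projection onto a smooth embedded submanifold is that $DP_{\mathcal{M}}(x^*)$ exists and equals the orthogonal projection $P_{T_{x^*}\mathcal{M}}$ onto the tangent space. Applying the chain rule to
\[
g(x) = P_{\mathcal{M}}\bigl(x - \alpha P_{T_x\mathcal{M}}(\nabla f(x))\bigr),
\]
I obtain
\[
Dg(x^*) = P_{T_{x^*}\mathcal{M}}\Bigl(I - \alpha\, D\bigl[x\mapsto P_{T_x\mathcal{M}}(\nabla f(x))\bigr](x^*)\Bigr).
\]

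Next, I would relate the inner derivative to the Riemannian Hessian. For an embedded submanifold, the Riemannian gradient is $\mathrm{grad}\, f(x)=P_{T_x\mathcal{M}}(\nabla f(x))$, and the Riemannian Hessian at a critical point acts on a tangent vector $v\in T_{x^*}\mathcal{M}$ as $\mathrm{Hess}\, f(x^*)[v] = P_{T_{x^*}\mathcal{M}}\bigl(D\,\mathrm{grad}\, f(x^*)[v]\bigr)$; the projection outside is harmless at a critical point because the normal component of $D\,\mathrm{grad}\, f(x^*)[v]$ is killed after composing with $P_{T_{x^*}\mathcal{M}}$ on the outside. Combining these steps yields, for every $v\in T_{x^*}\mathcal{M}$,
\[
Dg(x^*)[v] = v - \alpha\, \mathrm{Hess}\, f(x^*)[v],
\]
i.e.\ $Dg(x^*)|_{T_{x^*}\mathcal{M}} = I - \alpha\,\mathrm{Hess}\, f(x^*)$. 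Since the strict saddle assumption guarantees a negative eigenvalue $-\lambda<0$ of $\mathrm{Hess}\, f(x^*)$, the corresponding eigenvalue of the restriction is $1+\alpha\lambda>1$, giving an eigenvalue of $Dg(x^*)$ of magnitude strictly greater than $1$.

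The main obstacle, in my view, is the rigorous justification of the two projection-differentiation identities at $x^*$: (i) that $DP_{\mathcal{M}}(x^*) = P_{T_{x^*}\mathcal{M}}$ (which requires $\mathcal{M}$ to be smooth enough and $P_{\mathcal{M}}$ to be well-defined in a neighborhood, typically ensured by a tubular neighborhood argument), and (ii) the clean identification of $D\bigl[P_{T_x\mathcal{M}}(\nabla f(x))\bigr](x^*)$ with $\mathrm{Hess}\, f(x^*)$ on tangent directions. Once these two ingredients are in place, the eigenvalue statement is immediate from elementary spectral arithmetic.
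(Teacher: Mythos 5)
Your argument is correct and is essentially the standard one: this proposition is quoted from Lee et al.\ without a new proof in this paper, but your derivation of $Dg(x^*)|_{T_{x^*}\mathcal{M}} = I - \alpha\,\mathrm{Hess}\, f(x^*)$ is exactly what the paper's Lemma~\ref{thm0} gives when specialized to a critical point (where $u=-\alpha\,\mathrm{grad}\, f(x^*)=0$ and $D_uR_{x^*}[0]=P_{T_{x^*}\mathcal{M}}$), and the eigenvalue conclusion $1+\alpha\lambda>1$ then follows as you state.
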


\begin{proposition}{\bf(\cite[Prop. 9]{lee19})}\label{thm2:2}
	 For a compact submanifold $\mathcal{M}$, there is a strictly positive $\alpha$ such that $\operatorname{det}(\mathrm{D} g(x))\neq 0$.
\end{proposition}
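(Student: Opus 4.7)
The plan is to express $Dg(x)$ as a continuous perturbation of the identity on $T_x\mathcal{M}$ parametrized by $\alpha$, and then invoke compactness of $\mathcal{M}$ to obtain a uniform positive step-size. The key observation is that when $\alpha=0$ the map $g$ degenerates to $P_{\mathcal{M}}|_{\mathcal{M}}=\mathrm{id}_{\mathcal{M}}$, whose derivative at any point is the identity on the tangent space, so $\det(Dg(x))=1$; by continuity this determinant stays away from $0$ for sufficiently small $\alpha$, and compactness makes the bound uniform in $x$.

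First, since $\mathcal{M}$ is a compact embedded submanifold of the ambient Euclidean space, the tubular neighborhood theorem supplies some $\delta>0$ such that $P_{\mathcal{M}}$ is well-defined and $C^{1}$-smooth on the open tube $\mathcal{T}_{\delta}=\{y:\operatorname{dist}(y,\mathcal{M})<\delta\}$. Moreover $M:=\sup_{x\in\mathcal{M}}\|P_{T_{x}\mathcal{M}}\nabla f(x)\|<\infty$ by continuity and compactness. Whenever $\alpha<\delta/M$, the intermediate point $h(x):=x-\alpha P_{T_{x}\mathcal{M}}\nabla f(x)$ lies in $\mathcal{T}_{\delta}$, so $g(x)=P_{\mathcal{M}}(h(x))$ is $C^{1}$ on $\mathcal{M}$ and the chain rule gives $Dg(x)=DP_{\mathcal{M}}(h(x))\circ Dh(x)$.

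Next I would analyse this derivative at $\alpha=0$. In that case $h(x)=x\in\mathcal{M}$, and a standard fact about the orthogonal projection onto a smooth submanifold is that $DP_{\mathcal{M}}(p)=P_{T_{p}\mathcal{M}}$ for every $p\in\mathcal{M}$. Restricted to $T_{x}\mathcal{M}$, this says that $Dg(x)|_{\alpha=0}=\mathrm{id}_{T_{x}\mathcal{M}}$, so $\det(Dg(x))=1$ identically on $\mathcal{M}\times\{0\}$. The maps $(x,\alpha)\mapsto DP_{\mathcal{M}}(h(x))$ and $(x,\alpha)\mapsto Dh(x)$ are jointly continuous on the compact set $\mathcal{M}\times[0,\alpha_{0}]$ for any $\alpha_{0}<\delta/M$, hence so is the scalar $(x,\alpha)\mapsto\det(Dg_{\alpha}(x))$. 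A direct uniform-continuity argument then produces $\alpha^{*}\in(0,\alpha_{0}]$ such that $\det(Dg_{\alpha}(x))\geq 1/2$ for all $x\in\mathcal{M}$ and all $\alpha\in[0,\alpha^{*}]$, which is the desired conclusion.

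The main obstacle is controlling the derivative of $P_{\mathcal{M}}$ at points \emph{off} the manifold: this quantity involves the Weingarten endomorphism and second fundamental form of $\mathcal{M}$, and the plain chain-rule calculation can look intimidating. For the mere existence statement of the proposition the qualitative continuity-and-compactness argument above suffices, but to produce the \emph{explicit} threshold on $\alpha$ promised by the abstract one must package these curvature terms into a single Lipschitz-type constant; this is precisely the role of the retraction L-smooth property introduced later in the paper, which turns the qualitative perturbation $Dg(x)=I+O(\alpha)$ into a quantitative estimate $\|Dg(x)-I_{T_{x}\mathcal{M}}\|\leq\alpha L$, yielding invertibility whenever $\alpha<1/L$.
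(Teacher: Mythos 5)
Your argument reproduces, almost step for step, the original proof of Lee et al.\ that this paper is written to refute, and it inherits the same gap: the claimed joint continuity of $(x,\alpha)\mapsto\det(Dg_{\alpha}(x))$. The derivative $Dg(x)$ is a linear map from $T_x\mathcal{M}$ to $T_{g(x)}\mathcal{M}$ --- two \emph{different} subspaces of $\mathbb{R}^n$ --- so its ``determinant'' is only defined after choosing bases (equivalently, charts) at $x$ and at $g(x)$. Whether that determinant vanishes is basis-independent, but its value is not, and on a general compact manifold there is no continuous global choice of charts: as $\alpha$ grows, the point $g(x)$ can leave the domain of the chart you started with, forcing a chart change and a jump in the value of $\det(Dg_{\alpha}(x))$. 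Example~\ref{ex1} exhibits exactly this on $S^{n-1}$ with stereographic charts: $h_x(\alpha)=\det(Dg(x))$ is discontinuous at the $\alpha$ for which $g(x)$ hits the excluded pole. Hence the scalar function to which you apply uniform continuity on $\mathcal{M}\times[0,\alpha_0]$ need not be continuous, and the step ``$\det(Dg_{\alpha}(x))\ge 1/2$ for all $x$ and all $\alpha\le\alpha^{*}$'' does not follow.

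The paper's corrected proof sidesteps the determinant entirely: it verifies that $g$ is an immersion by bounding from below the basis-independent quantity $\sigma_{\min}(Dg(x)E)$, where $E$ is an orthonormal basis of $T_x\mathcal{M}$. Concretely, Lemma~\ref{thm0} factors $Dg(x)[\xi]=D_uR_x[u](I-\alpha\, Dgradf(x))[\xi]$ with $u=-\alpha\, gradf(x)$; the Poincar\'e separation theorem yields $\sigma_{\min}\left(D_uR_x[0](I-\alpha\, Dgradf(x))E\right)\ge 1/2$ for $\alpha<1/(2H)$; and Weyl's perturbation inequality together with the Retraction L-smooth assumption controls the passage from $u=0$ to $u=-\alpha\, gradf(x)$, giving the explicit threshold \eqref{albd}. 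Your closing remark that the L-smooth property is only needed for the explicit threshold misses the point that the qualitative determinant argument is itself broken; even for a purely qualitative compactness argument you would have to replace $\det$ by $\sigma_{\min}(Dg(x)|_{T_x\mathcal{M}})$ (or $|\det|$ computed in orthonormal bases), since that is the quantity that actually varies continuously in $(x,\alpha)$.
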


We point out in this note that  the proof of Proposition \ref{thm2:2} due to Lee et al. \cite{lee19} is incorrect. In fact, we need additional assumptions in Proposition \ref{thm2:2}. Our goal is to provide such a reasonable assumption together with a correct proof. As a by-product, we can establish an explicit bound of the step-size $\alpha$ for the globally stable iteration \eqref{RG}.

\noindent {\bf Notation.} First, the basic definitions  as well as notation from manifold optimization are listed in Table \ref{t1}. Let $I$ be the identity matrix of order $n$.
For a matrix $A$, $\lambda_i(A)$ and $\sigma_{i}(A)$ denote
the $i$-th largest eigenvalue and singular value, respectively.
The largest and smallest singular values of $A$ are denoted by $\sigma_{\max}(A)$ and  $\sigma_{\min}(A)$, respectively. Then $\sigma_{1}(A)=\sigma_{\max}(A)$, which is equal to the spectral norm $\|A\|$. $\sigma_{\min}(A)=\sqrt{\lambda_{\min}(A^TA)}$. For a vector $a$, $\|a\|$ returns the Euclidean norm.

\begin{table}\label{t1}
	\label{tab:simple}\caption{Standard notation in manifold optimization.}
	\centering
	\begin{tabular}{llllllll}
		\toprule
		Notations & Meaning \\
		\midrule
		$D f$    & Tangent map of mapping $f$     \\	
		$D_x$    & Euclidean Derivative at $x$    \\
		$J$ & Jacobian matrix of a mapping \\
		$\sigma(A)$	& Singular value (non-negative real number) of matrix $A$\\	
		$\operatorname{det}(M) $ & determinant of mapping $M$ with an arbitrary basis\\
		$\operatorname{rank}(f) $ & rank of mapping $f$ with an arbitrary basis\\
		$P_{\mathcal{M}} (x) $    & Projecting $x$ to manifold $\mathcal{M}$     \\
		$T_x \mathcal{M}$ & Tangent space at $x$ \\
		$R_x[u]$ & Retraction at point $x$ with tangent vector $u$\\
		$gradf(x)$ & Riemannian gradient of $f(x)$ at $x$\\
		$D gradf(x)$ & Direction derivative of Riemannian gradient $gradf(x)$ at $x$\\
		\bottomrule
	\end{tabular}
\end{table}

\section{Correction}

\subsection{Incorrectness}

Notice that
\[
\mathrm{D} g(x)=\mathrm{D} P_{\mathcal{M}}\left(x-\alpha P_{T_x \mathcal{M}} \nabla f(x)\right)\left(I-\alpha \mathrm{D}\left(P_{T_x \mathcal{M}} \nabla f\right)(x)\right).
\]
In order to prove Proposition \ref{thm2:2},
Lee et al. \cite{lee19} introduced
\[
h_x (\alpha)=\operatorname{det}\left(\mathrm{D} g(x)\right),
\]
which is the determinant of the matrix representing $Dg(x)$ with respect to a basis.
Let $(U, \phi)$ and $(V, \psi)$ be charts for $x$ and $g(x)$ on manifold $\mathcal{M}$, respectively, that is,
\[
\phi : U\subseteq \mathcal{M} \rightarrow R^{{\rm dim}(\mathcal{M})}, x\in U,~ \psi: V\subseteq \mathcal{M} \rightarrow R^{{\rm dim}(\mathcal{M})}, g(x)\in V.
\]
Then we can calculate $\operatorname{det} (D g)$ under differential structures $(U, \phi)$ and $(V, \psi)$:
\begin{equation}\label{detg}
\operatorname{det} (D g |_{x}) = \operatorname{det}( J(\psi \circ g \circ \phi^{-1}) |_{\phi(x)}),
\end{equation}
where $J|_{\phi(x)}$ denotes  Jacobian matrix at $\phi(x) \in R^{{\rm dim}(\mathcal{M})}$.

Lee et al. \cite{lee19} first proved that
\[
h_x (0) = \operatorname{det}\left( \mathrm{D} P_{\mathcal{M}}\left(x\right) \right)=
\operatorname{det}\left( P_{T_x \mathcal{M}} \right) = 1,
\]
where the second equality holds since $\mathrm{D} P_{\mathcal{M}}\left(x\right)|_{x\in \mathcal{M}}=P_{T_x \mathcal{M}}$ due to Lewis and Malick \cite[lemma 4]{lewis08}.

For any compact smooth manifold $\mathcal{M}$, there is a $r>0$ such that $P_{\mathcal{M}}$ is unique and smooth in a neighborhood of radius $r$ \cite{absil12}. Thus, for any $\alpha<r/\max_{x\in \mathcal{M}}\|\nabla f(x)\|$, it holds that $P_{\mathcal{M}}\left(x-\alpha P_{T_x \mathcal{M}} \nabla f(x)\right)$ is differentiable.
As shown by Lee et al. \cite[Prop.9]{lee19}  that
\begin{equation}\label{hneq0}
h_x (\alpha)> 0,~\forall   \alpha<C_{\mathcal{M},f}\triangleq\min
\left(\frac{r}{\max_{x\in \mathcal{M}}\|\nabla f(x)\|},~\frac{1}{B}
\right),
\end{equation}
where
\begin{equation}\label{hneq1}
		B:=\max _{x \in \mathcal{M}, \alpha<\frac{r}{\max _{x \in \mathcal{M}}\|\nabla f(x)\|}}\left|
D_{\alpha}h_{x}(\alpha)\right|<\infty.
\end{equation}
It implies from \eqref{hneq0} that $\mathrm{D} g$ is invertible, which completes the proof of Proposition \ref{thm2:2}.

Lee et al.'s above proof implicitly assumes that $h_{x}(\alpha)$ is differentiable for any $\alpha<r/\max_{x\in \mathcal{M}}\|\nabla f(x)\|$.
Unfortunately, this is in general incorrect.
As shown in the following counterexample,
$h_x (\alpha)$ could be even noncontinuous in the feasible interval.

\begin{example}\label{ex1}
Let $\mathcal{M}$ be unit sphere $S^{n-1}$. The projection operator is $
	P_{\mathcal{M}}\left(x\right)= x/||x||$ and the projection onto the tangent space of $\mathcal{M}$ is $P_{T_x M}= I-xx^T$.
	
	Choose local parameterizations $(U,\phi)$ and $(V,\psi)$ such that:
	\begin{eqnarray*}
		& & U = S^{n-1}\setminus \{(0,0,\cdots,0,1)\},~ V=S^{n-1}\setminus\{(0,0,\cdots,0,-1)\}, \\
		& & \phi(x_1,x_2,\cdots,x_n) = \left(\dfrac{x_1}{1-x_{n}},\dfrac{x_2}{1-x_{n}},\cdots,\dfrac{x_{n-1}}{1-x_{n}}\right),\\
		& & \psi(x_1,x_2,\cdots,x_n) = \left(\dfrac{x_1}{1+x_{n}},\dfrac{x_2}{1+x_{n}},\cdots,\dfrac{x_{n-1}}{1+x_{n}}\right).
	\end{eqnarray*}
It is not difficult to see that
	\begin{equation}
	D\phi \ne D\psi. \label{phipsi}
	\end{equation}
Let $x,\alpha_0$ be such that $x\in U$ and $g(x)|_{\alpha=\alpha_0} = (0,0,\cdots,0,1)$. Then, for $\alpha<\alpha_0$, we have
	\[
		h_x(\alpha) = \operatorname{det} (D\phi Dg(x) D\phi^{-1}).
	\]
However, when $\alpha=\alpha_0$, we can only use $(V,\psi)$ as local parameterization, i.e.,
	\[
		h_x(\alpha_0) = \operatorname{det} (D\psi Dg(x) D\phi^{-1}).
	\]
The discontinuity of $h_x(\alpha)$ follows from \eqref{phipsi}.
On the other hand, in this case, the radius $r$ in \eqref{hneq1} can be set arbitrarily large. It follows that  the constants $B$ \eqref{hneq1} is not well-defined. Consequently, the claim \eqref{hneq0} is incorrect.
\end{example}

\begin{remark}
It should be noted that if we always take $(V,\psi)$ as the local parameterization in Example \ref{ex1}, then $h_x(\alpha)$ becomes differentiable. It remains unknown whether there are proper local parameterizations to guarantee the differentiability  of  $h_x(\alpha)$. Even if the answer is positive, it is difficult to determine step-size and find the correct local parameterizations.
\end{remark}

%

\subsection{A corrective proof}

The local diffeomorphism (embedding) between manifolds is an {\it immersion}, which is sufficient to check the non-singularity of $Dg(x)$.
The basic idea of our correction in showing  that $g(x)$ is immersion is  to verify $\operatorname{rank} (Dg(x)) = \operatorname{dim} \mathcal{M}$ (or $\sigma_{\min}(Dg(x)|_{T_x \mathcal{M}})>0$) rather than $\det(Dg(x))\neq0$, though they are equivalent. 

Notice that
\[
J(\psi \circ g \circ \phi^{-1}) |_{\phi(x)} = J(\psi)|_{g(x)}J(g(x))J(\phi^{-1})|_{\phi(x)},
\]
where$J(\phi^{-1}): \mathbb{R}^{{\rm dim}(\mathcal{M})} \rightarrow T_x \mathcal{M}$. Since $\phi$ and $\psi$ are local diffeomorphism, $J(\phi^{-1})$ and $J(\psi)$ are both of full rank. 

We first present a useful technical lemma, which shows that $x$ and the tangent vector $u$ can be decoupled when $D g$ is restricted in the tangent space.

\begin{lemma}\label{thm0}
For $\alpha>0$, we have
	\begin{eqnarray*}
		J g(x) [\xi] = D_u R_x[u] (I - \alpha D grad f(x)) [\xi], ~ \xi \in T_x \mathcal{M}.
	\end{eqnarray*}
where $u=-\alpha grad f(x)$ and $R_x[u] = P_{\mathcal{M}}(x+u)$ is the projection-like retraction.
\end{lemma}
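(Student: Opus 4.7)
The plan is a direct chain-rule computation: rewrite $g$ using the retraction $R_x[u]=P_{\mathcal{M}}(x+u)$, differentiate, and recognize the two factors on the right-hand side of the claim. Since $\mathcal{M}$ is an embedded submanifold of Euclidean space, the Riemannian gradient satisfies $\mathrm{grad}\,f(x)=P_{T_x\mathcal{M}}\nabla f(x)$, so the iteration \eqref{RG} reads
\begin{equation*}
g(x)=P_{\mathcal{M}}\bigl(x+u(x)\bigr)=R_x[u(x)],\qquad u(x)=-\alpha\,\mathrm{grad}\,f(x)\in T_x\mathcal{M}.
\end{equation*}

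Next, I would differentiate this expression along an arbitrary $\xi\in T_x\mathcal{M}$. The inner map $x\mapsto x+u(x)$ goes from $\mathcal{M}$ into the ambient Euclidean space, and its derivative at $x$ sends $\xi$ to $\xi+Du(x)[\xi]=\xi-\alpha\,D\,\mathrm{grad}\,f(x)[\xi]$, where $D\,\mathrm{grad}\,f(x)$ is the ambient derivative of the vector field $x\mapsto\mathrm{grad}\,f(x)$. The outer map $P_{\mathcal{M}}$ takes ambient vectors at $x+u(x)$ to tangent vectors in $T_{g(x)}\mathcal{M}$. Therefore the chain rule yields
\begin{equation*}
Jg(x)[\xi]=DP_{\mathcal{M}}\bigl(x+u(x)\bigr)\,\bigl(I-\alpha\,D\,\mathrm{grad}\,f(x)\bigr)[\xi].
\end{equation*}

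The final step is to identify $DP_{\mathcal{M}}(x+u)$ with $D_uR_x[u]$. For fixed $x$, the map $u\mapsto R_x[u]=P_{\mathcal{M}}(x+u)$ is the composition of the translation $u\mapsto x+u$ (whose derivative is the identity) with $P_{\mathcal{M}}$, so $D_uR_x[u][v]=DP_{\mathcal{M}}(x+u)[v]$ for every ambient $v$. Substituting this identification gives precisely the asserted formula.

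The only genuine subtlety is the differentiability of $P_{\mathcal{M}}$ at $x+u(x)$; this is guaranteed by the standing restriction $\alpha<r/\max_{x\in\mathcal{M}}\|\nabla f(x)\|$ already recalled from \cite{absil12}, which ensures $x+u(x)$ stays inside the tubular neighborhood of radius $r$ where the projection is smooth and single-valued. Apart from this, the argument is a straightforward application of the chain rule and requires no local coordinates, which is exactly what lets us sidestep the discontinuity phenomenon exhibited in Example \ref{ex1}.
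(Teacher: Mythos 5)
Your proposal is correct and is essentially the paper's own argument: the paper carries out the same chain-rule computation, just written as a split difference quotient (separating the variation of the base point from the variation of $-\alpha\,\mathrm{grad}\,f$), and then uses the same identification $D_yP_{\mathcal{M}}(y)=D_uR_x[u]$ to combine the two terms. Your remark on where differentiability of $P_{\mathcal{M}}$ is needed is a welcome (and slightly more careful) addition, but it does not change the substance of the proof.
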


\begin{proof}\smartqed
 For any $\xi \in T_x \mathcal{M}$, we have
\begin{eqnarray}
D_x g(x) [\xi] &=& D_x P_{\mathcal{M}}(x-\alpha grad f(x)) [\xi]\nonumber\\
&&\lim_{t \rightarrow 0} \dfrac{P_{\mathcal{M}}(x+t\xi-\alpha grad f(x+t\xi))-P_{\mathcal{M}}(x-\alpha grad f(x))}{t}\nonumber\\
&=& \lim_{t \rightarrow 0} \dfrac{P_{\mathcal{M}}(x+t\xi-\alpha grad f(x+t \xi)) - P_{\mathcal{M}}(x+t\xi-\alpha grad f(x))}{t}\nonumber\\ &&~~~~+\dfrac{P_{\mathcal{M}}(x+t\xi-\alpha grad f(x)) - P_{\mathcal{M}}(x-\alpha grad f(x))}{t}\nonumber\\
&=& \lim_{t \rightarrow 0} \dfrac{D P_{\mathcal{M}}(x+t\xi-\alpha grad f(x))(\alpha grad f(x)-\alpha grad f(x+t\xi)) }{t}\nonumber\\
&& ~~~~+D_u R_x [u] \xi |_{u=-\alpha grad f(x)}\nonumber\\
&=& D_y P_{\mathcal{M}}(y)(-\alpha D gradf(x))[\xi]+ D_u R_x [u] \xi|_{u=-\alpha grad f(x)}, \label{plem1}
\end{eqnarray}
where $y$ denotes $x-\alpha grad f(x)$.

Since $D_u P_{\mathcal{M}}(x+u)=D_{u+x} P_{\mathcal{M}}(x+u)$ when $x$ is fixed and $u$ is independent of $x$, we have
\begin{equation}\label{plem2}
D_y P_{\mathcal{M}}(y) = D_u P_{\mathcal{M}}(x+u) =  D_u R_x [u]|_{u=-\alpha grad f(x)}
\end{equation}
Combining \eqref{plem1} with \eqref{plem2} yields that
\[
J g(x) [\xi]= D_u R_x [u](I-\alpha D gradf(x))[\xi]|_{u=-\alpha grad f(x)},
\]
which completes the proof.
\qed
\end{proof}

Let $\mathcal{M}$ be a compact manifold and $r$ be a positive constant so that  $P_{\mathcal{M}}$ is unique and smooth in a neighborhood of radius $r$ \cite{absil12}. Let $R_x[u]$ be a projection-like retraction at any $x\in\mathcal{M}$.
Absil \cite{absil09} has shown that $DR_x[u]$ is smooth with respect to $u\in T_x \mathcal{M}$. We now further introduce the Retraction L-smooth property for  $DR_x[u]$.

\begin{assumption}
[Retraction L-smooth]\label{ass}  $R_x[u]$ is assumed to be Retraction L-smooth around $0$ if there is a constant $L>0$ such that
	\[
	\|DR_x[u]-DR_x[0]\|\leq L\|u\|, ~\forall x\in \mathcal{M}, ~\forall u \in T_x \mathcal{M},~ \|u\|<r.
	\]
\end{assumption}

\begin{remark}
Clearly, the Retraction L-smooth property naturally holds for any smooth projection-like retraction. No access to information beyond retraction (such as $f(x)$) is necessary to determine the Lipschitz constant $L$.
\end{remark}

Under the above Retraction L-smooth assumption,
base on Lemma \ref{thm0} and the following two well-known lemmas, we can establish the main result.

\begin{lemma}
[Poincar\'e separation theorem]\label{thm4} Let  $A\in R^{n\times n}$ be symmetric and $U\in R^{n\times k}$ be column-wise orthogonal, i.e., $U^TU=I_k$. It holds that
		\[
			\lambda_{n-k+i}(A) \leqslant \lambda_{i}\left(U^{*} A U\right) \leqslant \lambda_{i}(A), \quad i=1, \cdots, k.
		\]
	\end{lemma}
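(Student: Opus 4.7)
The plan is to prove this via the Courant--Fischer min--max characterization of eigenvalues, which is the standard route for Poincar\'e-type interlacing results. The basic idea is that the restricted quadratic form $y \mapsto y^{T}(U^{T}AU)y$ can be identified, through the isometry $y\mapsto Uy$ (which is an isometry because $U^{T}U=I_{k}$), with the restriction of the quadratic form $x\mapsto x^{T}Ax$ to the $k$-dimensional subspace $U(\mathbb{R}^{k})\subseteq\mathbb{R}^{n}$.

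First I would recall the two dual forms of Courant--Fischer for a symmetric matrix $A\in\mathbb{R}^{n\times n}$:
\[
\lambda_{i}(A)=\max_{\substack{S\subseteq\mathbb{R}^{n}\\ \dim S=i}}\min_{\substack{x\in S\\ \|x\|=1}}x^{T}Ax
=\min_{\substack{S\subseteq\mathbb{R}^{n}\\ \dim S=n-i+1}}\max_{\substack{x\in S\\ \|x\|=1}}x^{T}Ax,
\]
and likewise for the $k\times k$ matrix $U^{T}AU$ with subspaces of $\mathbb{R}^{k}$. The key algebraic identity is $y^{T}(U^{T}AU)y=(Uy)^{T}A(Uy)$, together with $\|Uy\|=\|y\|$ for all $y\in\mathbb{R}^{k}$.

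For the upper bound $\lambda_{i}(U^{T}AU)\le\lambda_{i}(A)$, I would apply the max--min form to $U^{T}AU$: any $i$-dimensional subspace $S\subseteq\mathbb{R}^{k}$ gives an $i$-dimensional subspace $US\subseteq\mathbb{R}^{n}$ (the isometry preserves dimension), so the inner minimum over unit vectors in $S$ equals the minimum over unit vectors in $US$, which is at most the max--min over all $i$-dimensional subspaces of $\mathbb{R}^{n}$, namely $\lambda_{i}(A)$. For the lower bound $\lambda_{n-k+i}(A)\le\lambda_{i}(U^{T}AU)$, I would use the min--max form: write $\lambda_{n-k+i}(A)$ as the min over $(k-i+1)$-dimensional subspaces of $\mathbb{R}^{n}$ of the max of $x^{T}Ax$, and note that any $(k-i+1)$-dimensional subspace $S\subseteq\mathbb{R}^{k}$ yields such a subspace $US\subseteq\mathbb{R}^{n}$ via the same isometry. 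Taking the min over $S$ then gives $\lambda_{i}(U^{T}AU)\ge\lambda_{n-k+i}(A)$.

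There is no real obstacle here; the only point that needs slight care is checking that the map $y\mapsto Uy$ restricted to any subspace $S\subseteq\mathbb{R}^{k}$ preserves dimension, which is immediate from the fact that $U$ has orthonormal columns and is therefore injective. Stating the theorem via Courant--Fischer thus handles both inequalities in parallel with essentially one line of matching subspaces on either side.
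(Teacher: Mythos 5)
Your proof is correct: the Courant--Fischer argument, transporting $i$-dimensional (resp.\ $(k-i+1)$-dimensional) subspaces of $\mathbb{R}^{k}$ into $\mathbb{R}^{n}$ via the isometry $y\mapsto Uy$, is the standard derivation of the Poincar\'e separation theorem, and both inequalities go through exactly as you describe. The paper itself states this lemma as a well-known classical fact and gives no proof, so there is nothing to compare against; your write-up would serve as a complete justification if one were required.
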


\begin{lemma}[Weyl's eigenvalue perturbation inequality]\label{thm5}
		  For any matrices $M,\Delta\in R^{n\times p}$, it holds that
		\[
			\left|\sigma_{k}(M+\Delta)-\sigma_{k}(M)\right| \leq \sigma_{1}(\Delta).
		\]
	\end{lemma}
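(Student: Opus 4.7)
My plan is to reduce the singular value perturbation bound to a min-max variational characterization combined with the triangle inequality. First I will recall that for any matrix $A\in R^{n\times p}$, the $k$-th largest singular value admits the Courant-Fischer type description
\[
\sigma_{k}(A)=\max_{\dim V=k}\ \min_{x\in V,\ \|x\|=1}\|Ax\|,
\]
where $V$ ranges over $k$-dimensional subspaces of $R^{p}$. I will justify this identity by noting $\sigma_{k}(A)^{2}=\lambda_{k}(A^{T}A)$ and invoking the symmetric-matrix Courant-Fischer theorem, which is precisely Lemma \ref{thm4} applied to a column-orthonormal matrix $U$ whose columns span $V$.

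The core of the argument will then be a two-step triangle inequality. I will fix an arbitrary $k$-dimensional subspace $V\subset R^{p}$ and any unit vector $x\in V$; by the triangle inequality and the definition of the spectral norm,
\[
\|(M+\Delta)x\|\leq\|Mx\|+\|\Delta x\|\leq\|Mx\|+\sigma_{1}(\Delta).
\]
Taking the minimum over unit $x\in V$ and then the maximum over all such $V$ on both sides yields $\sigma_{k}(M+\Delta)\leq\sigma_{k}(M)+\sigma_{1}(\Delta)$. Next I will swap the roles of $M$ and $M+\Delta$ (equivalently, replace $\Delta$ by $-\Delta$ and use $\sigma_{1}(-\Delta)=\sigma_{1}(\Delta)$) to obtain the reverse inequality $\sigma_{k}(M)\leq\sigma_{k}(M+\Delta)+\sigma_{1}(\Delta)$. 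Combining the two bounds produces the absolute value on the left-hand side of the claimed inequality.

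I do not anticipate a serious obstacle. The only point requiring any care is establishing the min-max formula for possibly rectangular $A$, but this is a direct consequence of the symmetric case already recorded as Lemma \ref{thm4}. The remainder is a purely finite-dimensional linear algebra manipulation, so no manifold structure, compactness, or smoothness assumption from the earlier parts of the paper is needed here; the lemma will be available as a stand-alone tool when the main retraction-based result is assembled from Lemmas \ref{thm0} and \ref{thm4}.
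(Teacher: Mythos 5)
Your proof is correct. Note, however, that the paper offers no proof of this lemma at all: it is invoked as a ``well-known'' classical fact (Weyl's/Mirsky's perturbation bound for singular values), so there is no in-paper argument to compare against. Your route --- the max-min characterization $\sigma_{k}(A)=\max_{\dim V=k}\min_{x\in V,\|x\|=1}\|Ax\|$ followed by a two-sided triangle inequality --- is the standard textbook proof and is sound; the only point you gloss over slightly is that deriving the max-min formula from Lemma \ref{thm4} requires both the upper bound $\lambda_{k}(U^{*}A^{T}AU)\leq\lambda_{k}(A^{T}A)$ from the separation theorem \emph{and} the observation that equality is attained when the columns of $U$ span the top-$k$ eigenvectors of $A^{T}A$, but this is routine.
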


\begin{theorem}\label{thm3}
Let $\mathcal{M}$ be a compact submanifold in $R^n$. Under the Retraction L-smooth assumption, $g(x)$ is an immersion (or equivalently, $\operatorname{det}(\mathrm{D} g(x))\neq 0$) as long as the step-size $\alpha$ satisfies that
\begin{equation}\label{albd}
	\alpha< \min\left(\dfrac{1}{2H},\frac{1}{3LG}, \frac{r}{G} \right),
	\end{equation}
where $H=\max_{x\in\mathcal{M}}\|Dgrad f(x)\|$ and $G=\max_{x\in\mathcal{M}}\|gradf(x)\|$.
\end{theorem}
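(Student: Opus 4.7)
The plan is to prove directly that $\sigma_{\min}(Jg(x)|_{T_x\mathcal{M}})>0$, which is equivalent to $\det(Dg(x))\neq 0$ and means $g$ is an immersion. Lemma~\ref{thm0} already factors
\begin{equation*}
Jg(x)|_{T_x\mathcal{M}} \;=\; D_u R_x[u]\cdot\bigl(I-\alpha Dgrad f(x)\bigr)|_{T_x\mathcal{M}},\qquad u=-\alpha\,grad f(x),
\end{equation*}
so it suffices to bound the smallest singular value of each factor away from zero and close the argument with the submultiplicative inequality $\sigma_{\min}(AB)\ge\sigma_{\min}(A)\sigma_{\min}(B)$, which is valid for operators on finite-dimensional spaces of matching dimension.

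For the inner factor, I would fix a column-orthonormal $U\in\mathbb{R}^{n\times\dim\mathcal{M}}$ spanning $T_x\mathcal{M}$ and represent the restriction by $I-\alpha U^{T}Dgrad f(x)\,U$. Poincar\'e separation (Lemma~\ref{thm4}) transfers the ambient spectral bound $\|Dgrad f(x)\|\le H$ to the restriction, and Weyl's inequality (Lemma~\ref{thm5}) applied with $M=I$ and $\Delta=-\alpha U^{T}Dgrad f(x)\,U$ yields
\begin{equation*}
\sigma_{\min}\bigl((I-\alpha Dgrad f(x))|_{T_x\mathcal{M}}\bigr)\;\ge\;1-\alpha H\;>\;\tfrac{1}{2}
\end{equation*}
once $\alpha<1/(2H)$.

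For the outer factor, at $u=0$ we have $R_x[0]=x$ and, by the Lewis--Malick identity $DP_{\mathcal{M}}(x)=P_{T_x\mathcal{M}}$, also $D_u R_x[0]=P_{T_x\mathcal{M}}$, which acts as the identity on $T_x\mathcal{M}$; hence $\sigma_{\min}(D_u R_x[0]|_{T_x\mathcal{M}})=1$. The step-size bound $\alpha<r/G$ guarantees $\|u\|=\alpha\|grad f(x)\|<r$, so Assumption~\ref{ass} applies and yields $\|D_u R_x[u]-P_{T_x\mathcal{M}}\|\le L\|u\|\le\alpha LG$. A second application of Weyl's inequality then gives $\sigma_{\min}(D_u R_x[u]|_{T_x\mathcal{M}})\ge 1-\alpha LG>\tfrac{2}{3}$ whenever $\alpha<1/(3LG)$, and multiplying the two lower bounds completes the argument under the step-size condition \eqref{albd}.

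The step I anticipate as most delicate is not any individual estimate but the domain/codomain bookkeeping: the inner factor is intrinsic to $T_x\mathcal{M}$, while the outer factor carries $T_x\mathcal{M}$ into $T_{R_x[u]}\mathcal{M}$ at a different base point, so both operators have to be expressed in coordinates of the appropriate tangent spaces before Weyl's inequality and the product bound can be legally invoked. This is precisely where Poincar\'e separation is indispensable: it converts singular-value bounds on the ambient $n\times n$ representatives into bounds on the restricted operator on $T_x\mathcal{M}$ and thereby legitimizes the use of the scalar constants $H$ and $G$ in place of their tangent-space counterparts.
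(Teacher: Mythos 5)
Your factorization-and-multiply strategy has a genuine gap at the product step. The inequality $\sigma_{\min}(AB)\ge\sigma_{\min}(A)\sigma_{\min}(B)$ is only legitimate when the image of the inner factor $B$ lies in the subspace over which $\sigma_{\min}(A)$ is computed. Here the inner factor $\xi\mapsto\xi-\alpha\,Dgrad f(x)[\xi]$ does \emph{not} map $T_x\mathcal{M}$ into $T_x\mathcal{M}$: the ambient directional derivative $Dgrad f(x)[\xi]$ of the Riemannian gradient field generally carries a nonzero normal component, and only its tangential part $P_{T_x\mathcal{M}}Dgrad f(x)$ is the Riemannian Hessian (this is exactly the identification the paper makes when it rewrites $P_{T_x\mathcal{M}}(I-\alpha Dgrad f(x))E$ as $(I-\alpha Hess f(x))E$). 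Meanwhile your bound $\sigma_{\min}(D_uR_x[u]|_{T_x\mathcal{M}})\ge 1-\alpha LG$ controls the outer factor only on tangent inputs; as an operator on all of $\mathbb{R}^n$, $D_uR_x[u]=DP_{\mathcal{M}}(x+u)$ has rank at most $\dim\mathcal{M}$, so its unrestricted smallest singular value is $0$ and it can annihilate precisely the normal component that the inner factor produces. Replacing the inner factor by the compression $I-\alpha U^{T}Dgrad f(x)U$ does not cure this --- it silently changes the map being composed --- and Poincar\'e separation, which compares eigenvalues of a compression with those of the ambient matrix, does not address the fact that the intermediate vector leaves $T_x\mathcal{M}$. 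So ``multiplying the two lower bounds'' is not a valid step as written.

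The paper sidesteps the issue by never splitting the product: it evaluates the full composition at $u=0$, where $D_uR_x[0]=P_{T_x\mathcal{M}}$ converts the inner factor into $I-\alpha Hess f(x)$ restricted to $T_x\mathcal{M}$ with $\sigma_{\min}\ge 1/2$, and then perturbs the \emph{whole product} by Weyl's inequality, using $\|(D_uR_x[u]-D_uR_x[0])V_D\|\le L\|u\|\,\|V_D\|\le\tfrac{3}{2}LG\alpha$. Your outline is repairable along similar lines: write the intermediate vector $w=(I-\alpha Dgrad f(x))\xi$ as $w_T+w_N$ with $w_T\in T_x\mathcal{M}$, apply your restricted estimate to $w_T$, and control the cross term $\|D_uR_x[u]w_N\|=\|(D_uR_x[u]-P_{T_x\mathcal{M}})w_N\|\le L\|u\|\,\|w_N\|\le LG\alpha\cdot\alpha H$, which is small enough under \eqref{albd}. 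But that correction term must appear explicitly; without it the argument has a hole exactly at the step you yourself flagged as the delicate one.
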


\begin{proof}
According to Lemma \ref{thm0}, for any $\xi \in T_x \mathcal{M}$, we have
	\[
		D g(x) [\xi]= D_u R_x [u](I-\alpha D gradf(x))[\xi]|_{u=-\alpha grad f(x)}.
	\]
Let $m=\operatorname{dim}(\mathcal{M})$. Then $m\le n$. Let $\{e_1,\cdots,e_m\}$ be  an orthonormal basis of the tangent space $T_x \mathcal{M}$.
Denote $E$ by the matrix $(e_1,e_2,\cdots,e_m)\in R^{n\times m}$.
 Under this parameterazation,  $Dg(x) [\xi] $ is of full rank if and only if
	\begin{equation}\label{pf}
\operatorname{\sigma_{min} }\left( D_u R_x [u](I-\alpha D gradf(x))E|_{u=-\alpha grad f(x)}\right) > 0.
	\end{equation}
	
First, we have	
\begin{eqnarray}
 &&\operatorname{\sigma_{min}^2}\left( D_u R_x [0](I-\alpha D gradf(x))E \right)
	  =  \operatorname{\sigma_{min}^2}\left( P_{T_x \mathcal{M}} (I-\alpha D gradf(x))E\right) \nonumber \\
	 &=& \operatorname{\sigma_{min}^2}\left( (I-\alpha P_{T_x \mathcal{M}} D gradf(x))E\right)
	  = \operatorname{\sigma_{min}^2}\left( (I-\alpha Hess f(x))E\right)\nonumber\\
&=&\operatorname{\lambda}_{m} \left(E^T (I-\alpha Hess f(x))^T(I-\alpha Hess f(x))E\right)\nonumber\\
&\ge&\operatorname{\lambda}_{n}   ((I-\alpha Hess f(x))^T(I-\alpha Hess f(x)))  \nonumber\\
&=& \min_{i} (1-\alpha \lambda_{i}(Hess f(x)))^2
 \ge  (1-\alpha\|Hess f(x)\|)^2 \ge 1/4, \label{rzero}
	\end{eqnarray}
where the first inequality is due to Lemma \ref{thm4}, and the last inequality
holds since it follows from \eqref{albd} that
$1/2- \alpha\|Hess f(x)\|>0$.

%

Take $u=-\alpha gradf(x)$.
For $\alpha$ satisfying \eqref{albd}, we have
\begin{equation}\label{uG}
\|u\|=\|-\alpha gradf(x)\|\le \alpha G <r.
\end{equation}
	Let $V_D:=(I-\alpha D gradf(x))E$. Then we have
	\begin{eqnarray}
	&&\sigma_{\min}(D_u R_x[0]V_D) - \sigma_{\min}(D_u R_x[u]V_D) \le
	||D_u R_x[u]V_D-D_u R_x[0]V_D||  \nonumber\\
&&\le  ||D_u R_x[u]-D_u R_x[0]||\cdot ||V_D||
	 \le  L ||u|| \cdot  ||V_D||\le L G \alpha ||V_D||,\label{url}
	\end{eqnarray}
where the first inequality holds according to Lemma \ref{thm5}, the third inequality follows from the Retraction L-smooth assumption, and the last inequality is due to	\eqref{uG}. It follows from \eqref{url} that
	\begin{equation}\label{dlg:1}
		\sigma_{\min}(D_u R_x[u]V_D)\ge\sigma_{\min}(D_u R_x[0]V_D)-L G \alpha \|V_D\|.
	\end{equation}
According to Lemma \ref{thm4}, we have
	\begin{eqnarray}
		\|V_D\|&=& \sigma_{\max}((I-\alpha D gradf(x))E)
 \le  \sigma_{\max}(I-\alpha D gradf(x)) \nonumber \\
&\le& 1+\alpha \sigma_{\max}( D gradf(x))=1+\alpha\|D gradf(x)\|\le 3/2,\label{dlg:2}
	\end{eqnarray}
where the last inequality follows from \eqref{albd}.
%
Substituting \eqref{rzero} and \eqref{dlg:2} into \eqref{dlg:1} yields that
	\[
	\sigma_{\min}(D_u R_x[u]V_D)\ge\frac{1}{2} - \frac{3}{2}L G \alpha >0,
	\]
where the last inequality holds due to \eqref{albd}.
Thus, we have proved \eqref{pf}. Then we have
$\operatorname{det}(\mathrm{D} g(x))\neq 0$ and hence $g(x)$ is an immersion.
\qed
\end{proof}


%
%

\end{document}